\documentclass[12pt]{article}
\usepackage[a4paper,left=2.54cm,right=2.54cm,top=2.54cm,bottom=2.54cm]{geometry}
\usepackage{setspace}   
\usepackage{amsmath,amssymb,amsthm,graphicx,mathrsfs}
\usepackage{enumerate}
\usepackage{hyperref}
\usepackage[capitalise]{cleveref}
\usepackage{color,xcolor}
\usepackage[numbers,sort&compress]{natbib}
\allowdisplaybreaks
\numberwithin{equation}{section}

\newtheorem{theorem}{Theorem}[section]
\newtheorem{lemma}[theorem]{Lemma}
\newtheorem{definition}[theorem]{Definition}

\newtheorem{proposition}[theorem]{Proposition}

\newcommand{\benu}{\begin{enumerate}}
	\newcommand{\beqa}{\begin{eqnarray}}
		\newcommand{\beqan}{\begin{eqnarray*}}
			\newcommand{\eay}{\end{array}}
		\newcommand{\edm}{\end{displaymath}}
	\newcommand{\eenu}{\end{enumerate}}

\newcommand{\eeq}{\end{equation}}
\newcommand{\eeqa}{\end{eqnarray}}
\newcommand{\eeqan}{\end{eqnarray*}}

\newcommand{\bea}{\begin{array}{cc}}
\newcommand{\ena}{\end{array}}

\begin{document}
\pagenumbering{arabic} \setcounter{page}{1}

\begin{center}
{\Large \bf
Derivations and local  derivations on Euclidean Lie algebras}

\vspace{0.3in}
Lingen Ding$^{1}$

$^{1}$\ School of Mathematics, Foshan  University, Foshan  528000, Guangdong, China\\
Email:  lingending@fosu.edu.cn,
\end{center}
\vspace{3mm}

\begin{abstract}\normalsize
The present paper is devoted to studying derivations and local derivations on the
Euclidean Lie algebras
 $\mathfrak{e}(n)$. We give a complete desrciption of the derivation algebra of \(\mathfrak{e}(n)\) for \(n\geq 4\). Furthermore, we prove that 
every 
local derivations on the $\mathfrak{e}(n)$ is 
a derivation.

\end{abstract}
\vspace{3mm}

\noindent{\bf Key words: } derivations, local derivations, Euclidean Lie algebra.

\noindent{{\bf Mathematics Subject Classification 2020:} 17B05, 17B30, 17B40.}

\section{Introduction}
The notion of a local derivation was introduced and studied independently by Kadison\cite{Kad} and Larson and Sourour\cite{LS} in 1990 for Banach (and associative) algebras. Given an algebra $A$, a linear map $\Delta : A \to A$ is called a \emph{local derivation} if for every $x \in A$ there exists a derivation $D_x : A \to A$ (depending on $x$) such that $\Delta(x) = D_x(x)$. Local derivations reflect an interesting class of local-to-global phenomena on various algebraic structures; see \cite{AEK,AK1,AK2,AKh, Joh,LZ,Sem}  and the references therein.

Over the past decade, considerable attention has been devoted to local derivations on Lie (super)algebras. A central theme in this line of research is to determine whether all local derivations are indeed derivations for a given Lie (super)algebra. This property has been established for finite-dimensional semisimple Lie algebras\cite{AK2}, Borel subalgebras of finite-dimensional simple Lie algebras\cite{YC}, the Witt algebra \cite{CZZ}, the Witt algebra over fields of prime characteristic\cite{Yao}, the $W(2,2)$ algebra\cite{WGL}, the super Virasoro algebra\cite{WGLC}, the conformal Galilei algebra \cite{AB}, the Schr\"odinger algebras\cite{WT,AY} , and solvable Lie algebras of maximal rank\cite{KKY}. On the other hand, certain Lie algebras admit \emph{pure} local derivations-that is, local derivations which fail to be derivations. Examples include finite-dimensional filiform Lie algebras\cite{AK2} and the $1$-spatial ageing algebra $\mathfrak{age}(1)$\cite{LW}.

Let $E(n)$ denote the group of Euclidean motions in $\mathbb{R}^n$; this is the noncompact semidirect product group $SO(n) \ltimes \mathbb{R}^n$. The complexification of its Lie algebra, denoted by $\mathfrak{e}(n)$, admits a basis
\[
\{E_{i,j}, H_k \mid 1 \le i < j \le n,\ 1 \le k \le n\}
\]
with nonzero commutation relations given by
\[
[E_{i,j}, E_{j,k}] = E_{i,k}, \quad
[E_{i,j}, H_j] = H_i, \quad
[E_{i,j}, H_i] = -H_j,
\]
where we set $E_{i,j} = -E_{j,i}$ throughout. The $2$-local derivations and bi-derivations of $\mathfrak{e}(3)$ were studied in\cite{Ab}. For $n \ge 4$, the problem of characterizing local derivations on $\mathfrak{e}(n)$ has remained open.

In this paper, we determine all derivations and local derivations on the Euclidean Lie algebra $\mathfrak{e}(n)$ for $n \ge 4$. Our main results can be summarized as follows:
\begin{itemize}
  \item We give a complete description of $\operatorname{Der}(\mathfrak{e}(n))$ (Theorem~\ref{thm:Der});
  \item We prove that every local derivation on $\mathfrak{e}(n)$ is a derivation (Theorem~\ref{thm:locDer}).
\end{itemize}

The paper is organized as follows. In Section~\ref{sec:prelim}, we collect basic definitions and establish notation. In Section~\ref{Sec3 Der}, we characterize the derivations of $\mathfrak{e}(n)$. In Section~\ref{Sec4 locDer}, we show that every local derivation on $\mathfrak{e}(n)$ is a derivation.

Throughout this paper, $\mathbb{C}$ denotes the field of complex numbers, and all vector spaces and algebras are understood to be over $\mathbb{C}$ unless stated otherwise.


\section{ Preliminaries}\label{sec:prelim}

In this section we recall some definitions, symbols and notations for later use in this paper.

The group \( E(n) \) of Euclidean motions in \( \mathbb{R}^n \) is the noncompact semidirect product group \( SO(n) \ltimes \mathbb{R}^n  \). The complexification of its Lie algebra  called \( \mathfrak{e}(n) \) , which admits a basis \( \{E_{i,j}, H_k \mid 1 \leq i < j\leq n,\, 1\leq k\leq n\} \) with non-zero commutation relations given by
\[
[E_{i,j}, E_{j,k}] = E_{i,k}, \quad [E_{i,j}, H_j] = H_i, \quad [E_{i,j}, H_i] = -H_j,
\]
assuming \( E_{i,j} = -E_{j,i} \).

In fact, the matrix realization of the Euclidean Lie algebra \( \mathfrak{e}(n) \) can be implemented by the following block matrix form:
\[
\begin{pmatrix}
    &   &   & x_1 \\
    & A &   & \vdots \\
    &   &   & x_n \\
    0 & 0 & \dots & 0
\end{pmatrix}
\]
where the matrix \( A \) is an $n\times n$ skew-symmetric matrix. In this realization, 
\[
E_{i,j} = e_{i,j} - e_{j,i}, \quad 1 \leq i \neq j \leq n, \qquad 
H_k = e_{k,n+1}, \quad 1 \leq k \leq n
\]
with the matrix units \( e_{i,j} \). The Euclidean Lie algebra \( \mathfrak{e}(n) \) can be viewed as a semidirect product \( 
\mathfrak{e}(n) = \mathfrak{so}(n)\ltimes H(n)  \) of two subalgebras:
the Lie algebra
\( \mathfrak{so}(n) = \operatorname{span}\{ E_{i,j}\mid 1 \leq i<j\leq n\} \) and 
the abelian subalgebra 
\(
H(n)= \operatorname{span}\{H_{k}\mid 1 \leq k\leq n\} \). 
Define \(p_0 :\mathfrak{e}(n)\longrightarrow \mathfrak{so}(n)\) and \(p_1 :\mathfrak{e}(n)\longrightarrow H(n)\) as the projections onto the rotation and translation parts, respectively, in the decomposition \( 
\mathfrak{e}(n) = \mathfrak{so}(n)\ltimes H(n)  \).

A derivation on a Lie algebra \( L \) is a linear map \( D : L \to L \) which satisfies the Leibniz rule:
\[
D([x, y]) = [D(x), y] + [x, D(y)] \quad \text{for any } x, y \in L.
\]
The set of all derivations of \( L \) is denoted by \( \operatorname{Der}(L) \) and is a Lie algebra with respect to the commutation operation.

For \( x \in L \), the map \( \operatorname{ad}x : L \to L \), defined by \( \operatorname{ad}x(z) = [x, z] \) for all \( z\in L\), is a derivation of \( L \), and derivations of this form are called inner derivations. The set of all inner derivations of \( L \), denoted by \( \operatorname{IDer}(L) \), is an ideal in \( \operatorname{Der}(L) \).
\begin{lemma}\label{lem:2.1}
    Let \(D\) be any derivation on the Euclidean Lie algebra \(\mathfrak{e}(n) = \mathfrak{so}(n) \ltimes H(n)\). Define the linear maps \(D_0, D_1: \mathfrak{e}(n) \to \mathfrak{e}(n)\) by
    \[
    D_0 = p_0 \circ D \circ p_0 + p_1 \circ D \circ p_1, \quad D_1 = p_0 \circ D \circ p_1 + p_1 \circ D \circ p_0,
    \]
    where \(p_0\) and \(p_1\) are the projections onto the subspaces \(\mathfrak{so}(n)\) and \(H(n)\), respectively. Then \(D=D_0+D_1\) and 
    \(D_0,\,D_1\) are also derivations on \(\mathfrak{e}(n)\).
\end{lemma}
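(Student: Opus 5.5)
The plan is to exploit the $\mathbb{Z}_2$-grading of $\mathfrak{e}(n)$ given by $\mathfrak{e}(n)_{\bar 0}=\mathfrak{so}(n)$ and $\mathfrak{e}(n)_{\bar 1}=H(n)$. The defining relations give
\[
[\mathfrak{so}(n),\mathfrak{so}(n)]\subseteq \mathfrak{so}(n),\quad [\mathfrak{so}(n),H(n)]\subseteq H(n),\quad [H(n),H(n)]=0 .
\]
So the grading is compatible with the bracket: $[\mathfrak{e}(n)_{\bar a},\mathfrak{e}(n)_{\bar b}]\subseteq \mathfrak{e}(n)_{\bar a+\bar b}$. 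With respect to this grading, $D_0$ is the even component of $D$ (it preserves both summands) and $D_1$ is the odd component (it swaps them). The identity $D=D_0+D_1$ is immediate: expand $D=(p_0+p_1)\circ D\circ(p_0+p_1)$ into four terms and regroup.

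To show $D_0$ and $D_1$ are derivations, I will use the standard trick of the grading automorphism. Let $\theta=p_0-p_1$, the linear map acting as $1$ on $\mathfrak{so}(n)$ and as $-1$ on $H(n)$. I first check that $\theta$ is a Lie algebra automorphism, using the three inclusions above on homogeneous elements. For $x,y\in\mathfrak{so}(n)$ both sides of $\theta[x,y]=[\theta x,\theta y]$ carry sign $+$. For $x\in\mathfrak{so}(n)$ and $y\in H(n)$ both sides carry sign $-$. For $x,y\in H(n)$ both sides vanish. Bilinearity then extends this to all of $\mathfrak{e}(n)$. Since $\theta$ is an automorphism, $\theta D\theta^{-1}=\theta D\theta$ is again a derivation.

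Next I compute $\theta D\theta$ by expanding with $\theta=p_0-p_1$:
\[
\theta D\theta = p_0Dp_0+p_1Dp_1-p_0Dp_1-p_1Dp_0 = D_0-D_1 .
\]
Hence $D_0=\tfrac12(D+\theta D\theta)$ and $D_1=\tfrac12(D-\theta D\theta)$. Both are linear combinations of derivations, and $\operatorname{Der}(\mathfrak{e}(n))$ is a vector space, so both are derivations.

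There is no real obstacle here. The only point requiring care is verifying that $\theta$ is an automorphism, which is exactly the content of the grading relations, in particular that $H(n)$ is an abelian ideal. As an alternative, one could check the Leibniz rule directly for $D_0$ and $D_1$ on pairs of homogeneous elements, projecting the Leibniz identity for $D$ onto the two summands. That approach amounts to the same parity bookkeeping.
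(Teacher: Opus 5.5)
Your proof is correct, but it takes a different route from the paper's.

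The paper checks the Leibniz rule for $D_0$ by hand. It writes $x=x_0+x_1$ and $y=y_0+y_1$, and expands
\[
D_0([x,y])=p_0D([x_0,y_0])+p_1D([x_1,y_0]+[x_0,y_1])
\]
using the Leibniz rule for $D$. It then uses $[\mathfrak{so}(n),\mathfrak{so}(n)]\subseteq\mathfrak{so}(n)$, $[\mathfrak{so}(n),H(n)]\subseteq H(n)$ and $[H(n),H(n)]=0$ to rewrite each projected term as a bracket involving $D_0$. The case of $D_1$ is only called ``completely analogous.'' This is the alternative you sketch in your last sentence.

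Your grading-automorphism argument puts all that parity bookkeeping into one check: that $\theta=p_0-p_1$ is an automorphism. The identities $D_0=\tfrac12(D+\theta D\theta)$ and $D_1=\tfrac12(D-\theta D\theta)$ then give both components at once. So the odd part is genuinely proved rather than left to an analogous computation. It also shows the lemma is just the general fact that derivations of a $\mathbb{Z}_2$-graded Lie algebra split into even and odd derivations.

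The cost is dividing by $2$. That is harmless over $\mathbb{C}$, but the paper's direct componentwise check avoids it and works in any characteristic. In characteristic $2$ one has $\theta=\mathrm{Id}$, so your trick gives nothing there.
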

\begin{proof}
The identity $D = D_0 + D_1$ follows immediately from
$p_0 + p_1 = \text{Id}_{\mathfrak{e}(n)}$.

It remains to verify the Leibniz rule for $D_0$ and $D_1$.
For any  elements $x = x_0 + x_1$, $y = y_0 + y_1$
with $x_0, y_0 \in \mathfrak{so}(n)$ and $x_1, y_1 \in H(n)$,
we have the Lie bracket decomposition
\begin{equation}\label{eq:bracket-decomp}
    [x, y] = [x_0, y_0] + [x_1, y_0] + [x_0, y_1],
\end{equation}
where $[x_0, y_0] \in \mathfrak{so}(n)$ and $[x_1, y_0], [x_0, y_1] \in H(n)$.
(Note that $H(n)$ is an abelian ideal of $\mathfrak{e}(n)$,
so $[x_1, y_1] = 0$.)

By hypothesis, $D$ is a derivation on $\mathfrak{e}(n)$,
and the projections satisfy the compatibility conditions:
\begin{equation}\label{eq:compatibility}
    \begin{aligned}
        &D_0(\mathfrak{so}(n))) \subseteq \mathfrak{so}(n),     &&D_0(H(n))   \subseteq H(n),   \\
        &D_1(\mathfrak{so}(n)) \subseteq H(n),      &&D_1(H(n))  \subseteq \mathfrak{so}(n).
    \end{aligned}
\end{equation}

We now verify that $D_0$ satisfies the Leibniz rule.
Using~\eqref{eq:bracket-decomp} and~\eqref{eq:compatibility},
\begin{align}
    D_0([x, y])
    &= p_0\bigl( D(p_0([x, y])) \bigr)
       + p_1\bigl( D(p_1([x, y])) \bigr) \notag \\[4pt]
    &= p_0\bigl( D([x_0, y_0]) \bigr)
       + p_1\Bigl( D\bigl( [x_1, y_0] + [x_0, y_1] \bigr) \Bigr) \notag \\[4pt]
    &= p_0\bigl( [D(x_0), y_0] + [x_0, D(y_0)] \bigr)  + p_1\bigl( [D(x_1), y_0] + [x_1, D(y_0)] \bigr) \notag \\
    &\quad + p_1\bigl( [D(x_0), y_1] + [x_0, D(y_1)] \bigr) \label{eq:D0-line5} \\[4pt]
    &= [D_0(x_0), y_0] + [x_0, D_0(y_0)]  + [D_0(x_1), y_0] + [x_1, D_0(y_0)] \notag \\
    &\quad + [D_0(x_0), y_1] + [x_0, D_0(y_1)]\notag  \\[4pt]
    &= [D_0(x_0), y] + [x_0, D_0(y)]
       + [D_0(x_1), y] + [x_1, D_0(y)] \notag \\[4pt]
    &= [D_0(x), y] + [x, D_0(y)]. \notag
\end{align}
Thus $D_0$ is a derivation on $\mathfrak{e}(n)$.

A completely analogous computation shows that $D_1$
is also a derivation on $\mathfrak{e}(n)$. 
\end{proof}

\begin{definition}
A linear map \( \Delta \) is called a \emph{local derivation} if for any \( x \in L \), there exists a derivation \( D_x : L \to L \) (depending on \( x \)) such that \( \Delta(x) = D_x(x) \). The set of all local derivations on \( L \) is denoted by \( \operatorname{LocDer}(L) \).
\end{definition}
\begin{lemma}
Let \(\Delta\) be a local derivation on the Euclidean algebra \(\mathfrak{e}(n)\). Defined  \(\Delta_{0,0}:\mathfrak{e}(n)\longrightarrow \mathfrak{e}(n)\) as 
    \[
    \Delta_{0,0}=p_0\circ \Delta \circ p_0
    \]
    Then \(\Delta_{0,0}\mid_{\mathfrak{so}(n)}\) is a local derivation on 
\(\mathfrak{so}(n)\).
     \end{lemma}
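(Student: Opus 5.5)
Fix \(x\in\mathfrak{so}(n)\). Since \(\Delta\) is a local derivation on \(\mathfrak{e}(n)\), there is a derivation \(D_x\) of \(\mathfrak{e}(n)\) with \(\Delta(x)=D_x(x)\). Because \(p_0(x)=x\), we get
\[
\Delta_{0,0}(x)=p_0(\Delta(x))=p_0(D_x(x))=p_0(D_x(p_0(x))).
\]
The plan is to show that the map \(d_x:=p_0\circ D_x|_{\mathfrak{so}(n)}:\mathfrak{so}(n)\to\mathfrak{so}(n)\) is a derivation of \(\mathfrak{so}(n)\). Then \(\Delta_{0,0}|_{\mathfrak{so}(n)}(x)=d_x(x)\) for every \(x\), which is exactly the local derivation property. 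Linearity of \(\Delta_{0,0}|_{\mathfrak{so}(n)}\) is clear, since it is a composition of linear maps, and it takes values in \(\mathfrak{so}(n)\).

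To see that \(d_x\) is a derivation, I would invoke Lemma~\ref{lem:2.1}. Write \(D_x=(D_x)_0+(D_x)_1\), where both summands are derivations of \(\mathfrak{e}(n)\). The component \((D_x)_0=p_0 D_x p_0+p_1D_xp_1\) maps \(\mathfrak{so}(n)\) into \(\mathfrak{so}(n)\). On \(\mathfrak{so}(n)\) it agrees with \(p_0D_xp_0\), that is, with \(d_x\). For \(a,b\in\mathfrak{so}(n)\) we have \([a,b]\in\mathfrak{so}(n)\), so
\[
d_x([a,b])=(D_x)_0([a,b])=[(D_x)_0(a),b]+[a,(D_x)_0(b)]=[d_x(a),b]+[a,d_x(b)].
\]
Hence \(d_x\in\operatorname{Der}(\mathfrak{so}(n))\).

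The only point needing care is the justification that \(D_0\) from Lemma~\ref{lem:2.1} is a derivation. The lemma supplies this. Alternatively, one can verify it directly: apply \(p_0\) to \(D_x([a,b])=[D_x a,b]+[a,D_x b]\), and use that \(H(n)\) is an ideal, so \(p_0[h,b]=0\) for \(h\in H(n)\). This gives \(p_0[D_xa,b]=[p_0D_xa,b]\), and similarly for the other term. I therefore expect no real obstacle.
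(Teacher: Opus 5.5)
Your proposal is correct and is essentially the paper's proof: the paper also writes \(\Delta_{0,0}(x)=p_0(D_x(x))=(D_x)_0(x)\) and uses Lemma~\ref{lem:2.1} to conclude that \((D_x)_0|_{\mathfrak{so}(n)}\) is a derivation of \(\mathfrak{so}(n)\). Your alternative direct check, which uses only that \(H(n)\) is an ideal so that \(p_0[h,b]=0\) for \(h\in H(n)\), is a self-contained justification of that step; the paper leaves this point implicit.
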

\begin{proof}
   Let \(\Delta\) be a local derivation on the Euclidean Lie algebra \(\mathfrak{e}(n)\). Then the restriction \(\Delta_{0,0}|_{\mathfrak{so}(n)}\) defines a linear map from \(\mathfrak{so}(n)\) to itself. For any \(x \in \mathfrak{so}(n)\), we have
\[
\Delta_{0,0}(x) = p_0 \circ \Delta(x) = p_0 \circ D_x(x) = (D_x)_0(x).
\]
Moreover, \((D_x)_0|_{\mathfrak{so}(n)}\) is a derivation on \(\mathfrak{so}(n)\), which implies that \(\Delta_{0,0}|_{\mathfrak{so}(n)}\)  is a local derivation on \(\mathfrak{so}(n)\).
\end{proof}


\section{Derivations of $\mathfrak{e}(n)$}\label{Sec3 Der}

In this section, we determine the derivations of 
 the Euclidean Lie algebra $\mathfrak{e}(n)$. 

Define
\( \delta \) to be a linear map from \( \mathfrak{e}(n)\) to itself by 
\[
\delta\mid_{\mathfrak{so}(n)} =0, \ \delta\mid_{H(n)} =\text{Id}_{H(n)}.\]
\begin{lemma}\label{D0}
    Let \(D\) be any derivation on \(
\mathfrak{e}(n)
    \). Then \(D_0=\text{ad}X+\lambda \delta\) for some \(X\in \mathfrak{so}(n),\lambda\in \mathbb{C}.\)
\end{lemma}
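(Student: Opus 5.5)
By Lemma~\ref{lem:2.1}, $D_0$ is a derivation of $\mathfrak{e}(n)$ that preserves both $\mathfrak{so}(n)$ and $H(n)$. I will first pin down its restriction to $\mathfrak{so}(n)$, and then show that what remains acts on $H(n)$ as a scalar.

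\textbf{Step 1: the $\mathfrak{so}(n)$ part.} For $x,y\in\mathfrak{so}(n)$ we have $D_0([x,y])=[D_0x,y]+[x,D_0y]$ with both sides in $\mathfrak{so}(n)$. Hence $D_0|_{\mathfrak{so}(n)}$ is a derivation of $\mathfrak{so}(n)$. For $n\ge 3$, $n\neq 4$, $\mathfrak{so}(n)$ is simple. For $n=4$, $\mathfrak{so}(4)\cong\mathfrak{sl}_2\oplus\mathfrak{sl}_2$ is semisimple. In either case every derivation of $\mathfrak{so}(n)$ is inner, so there is $X\in\mathfrak{so}(n)$ with $D_0|_{\mathfrak{so}(n)}=\operatorname{ad}X|_{\mathfrak{so}(n)}$.

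\textbf{Step 2: reduction to a module map.} Since $\operatorname{ad}X$ also preserves $\mathfrak{so}(n)$ and $H(n)$, the map $D':=D_0-\operatorname{ad}X$ is a derivation that vanishes on $\mathfrak{so}(n)$ and maps $H(n)$ into $H(n)$. Applying the Leibniz rule to $[x,h]$ with $x\in\mathfrak{so}(n)$ and $h\in H(n)$ gives
\[
D'([x,h])=[D'x,h]+[x,D'h]=[x,D'h].
\]
Therefore $T:=D'|_{H(n)}$ commutes with the action of $\mathfrak{so}(n)$ on $H(n)$.

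\textbf{Step 3: $T$ is a scalar.} This is the key step. The module $H(n)$ is the natural representation $\mathbb{C}^n$ of $\mathfrak{so}(n)$, since $E_{i,j}$ acts by $e_{i,j}-e_{j,i}$. For $n\ge 3$ this representation is irreducible, so Schur's lemma gives $T=\lambda\,\mathrm{Id}_{H(n)}$.

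The irreducibility can also be checked directly from the brackets, which avoids citing representation theory. Write $T(H_k)=\sum_l t_{lk}H_l$. The relation $[E_{i,j},H_j]=H_i$ gives $T(H_i)=[E_{i,j},T(H_j)]$. Now take $m\neq i,j$, which is possible since $n\ge3$. The element $E_{i,j}$ annihilates $H_m$, so applying $T$ to $[E_{i,j},H_m]=0$ gives $[E_{i,j},T(H_m)]=0$. This forces $t_{im}=t_{jm}=0$, so $T$ is diagonal. The relation $T(H_i)=[E_{i,j},T(H_j)]$ then gives $t_{ii}=t_{jj}$.

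\textbf{Conclusion.} Combining the three steps, $D_0=\operatorname{ad}X+\lambda\delta$, which is the claimed form.

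The only potential obstacle is Step 1, in the case $n=4$, where $\mathfrak{so}(4)$ is not simple. Semisimplicity is enough there, since Whitehead's lemma (equivalently, the theorem that derivations of semisimple Lie algebras are inner) still applies.
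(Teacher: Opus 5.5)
Your proof is correct and follows the paper's argument. You restrict $D_0$ to $\mathfrak{so}(n)$ and use semisimplicity to get $\operatorname{ad}X$, then show that $D_0-\operatorname{ad}X$ acts on $H(n)$ as a scalar via the same bracket computation: $[E_{i,j},H_m]=0$ for $m\notin\{i,j\}$ kills the off-diagonal entries, and $[E_{i,j},H_j]=H_i$ equalizes the diagonal ones, with your Schur's lemma remark being a conceptual restatement of this computation.
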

\begin{proof}
Clearly, \(\delta\) is a derivation on \(\mathfrak{e}(n)\) and \(\delta_0=\delta\).

For any \(D\in \text{Der}(\mathfrak{e}(n))\), \(D_0\mid_{\mathfrak{so}(n)}\) is a derivation on \(\mathfrak{so}(n)\). As $n\geq 4$, $\mathfrak{so}(n)$ is a semisimple Lie algebra. Then \(D_0\mid_{\mathfrak{so}(n)}\) is an inner derivation on \(\mathfrak{so}(n)\), i.e., \(D_0\mid_{\mathfrak{so}(n)}=\text{ad}\,X\) for some \(X\in \mathfrak{so}(n)\).

Set \(D_0'=D_0-\text{ad}\,X\). Then \(D_0'(x)=0\) for all \(x\in \mathfrak{so}(n)\). For any \(1\leq i\leq n\), we can write 
\[D_0'(H_i)=\sum_{1\leq j\leq n}d_{i,j}H_j, \,\text{where}\ d_{i,j}\in \mathbb{C}.\]
Choose distinct indices \(i,j\) with \(1 \leq i \neq j \leq n\). For any \(k \notin \{i,j\}\), we have \([E_{k,j}, H_i] = 0\), and therefore
\[
0 = D_0'([E_{k,j}, H_i]) = [E_{k,j}, D_0'(H_i)] = \left[ E_{k,j}, \sum_{1 \leq l \leq n} d_{i,l} H_l \right] = d_{i,j} H_k - d_{i,k} H_j.
\]
It follows that \(d_{i,j} = 0\) for all distinct \(i, j\). Then \(D_0'(H_i)=d_{i,i}H_i\) for all \(i=1,\ldots,n\).

Now using the relation \([E_{i,j}, H_j] = H_i\), we obtain
\[
D_0'(H_i) = D_0'([E_{i,j}, H_j]) = [E_{i,j}, D_0'(H_j)]  = [E_{i,j}, d_{j,j} H_j] = d_{j,j} H_i.
\]
Since we also have \(D_0'(H_i) = d_{i,i} H_i\), it follows that \(d_{i,i} = d_{j,j}\) for all \(1 \leq i\neq j\leq n\). Hence \(D_0' = \lambda \delta\) for some \(\lambda \in \mathbb{C}\), and consequently \(D_0 = \operatorname{ad} X + \lambda \delta\).

\end{proof}

\begin{lemma}\label{D1;Hn}
    Let \(D\) be any derivation on \(
\mathfrak{e}(n)
    \). Then \(D_1\mid_{H(n)}=0\).
\end{lemma}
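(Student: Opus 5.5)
By Lemma~\ref{lem:2.1}, $D_1$ is itself a derivation of $\mathfrak{e}(n)$. It maps $H(n)$ into $\mathfrak{so}(n)$ and $\mathfrak{so}(n)$ into $H(n)$. The plan is to exploit that $H(n)$ is abelian: for all $h,h'\in H(n)$ we have $[h,h']=0$. Applying $D_1$ gives
\[
0=D_1([h,h'])=[D_1(h),h']+[h,D_1(h')] .
\]
Both brackets lie in $H(n)$, since $D_1(h),D_1(h')\in\mathfrak{so}(n)$ act on $H(n)$ as the standard representation. Write $A=D_1|_{H(n)}:H(n)\to\mathfrak{so}(n)$ and identify $H(n)\cong\mathbb{C}^n$ with $\mathfrak{so}(n)$ acting by skew-symmetric matrices. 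The condition becomes
\[
A(h)h'=A(h')h \qquad \text{for all } h,h'.
\]
Set $T(h,h')=A(h)h'$. Then $T$ is symmetric in $(h,h')$. For fixed $h$ it is skew in the sense that $\langle A(h)h',h''\rangle=-\langle h',A(h)h''\rangle$. The standard ``symmetric in two slots, skew in two others'' argument then gives
\[
\langle A(h)h',h''\rangle=0
\]
for all $h,h',h''$, hence $A=0$. This is the same argument that shows a torsion-free metric connection correction vanishes.

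Concretely, in coordinates I would write $D_1(H_i)=\sum_{k<l} c^{i}_{k,l}E_{k,l}$ and extend by $c^i_{l,k}=-c^i_{k,l}$. Then use $[E_{k,l},H_j]=\delta_{l j}H_k-\delta_{kj}H_l$ to compute
\[
[D_1(H_i),H_j]=\sum_k c^i_{k,j}H_k .
\]
The identity $[D_1(H_i),H_j]=[D_1(H_j),H_i]$ yields $c^i_{k,j}=c^j_{k,i}$ for all $i,j,k$. Combined with $c^i_{k,j}=-c^i_{j,k}$, the coefficient $c^i_{k,j}$ is symmetric in $(i,j)$ and antisymmetric in $(k,j)$. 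Chaining the two symmetries three times gives
\[
c^i_{k,j}=c^j_{k,i}=-c^j_{i,k}=-c^k_{i,j}=c^k_{j,i}=c^i_{j,k}=-c^i_{k,j},
\]
so $c^i_{k,j}=0$.

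The main obstacle is bookkeeping the signs in the bracket formula with the convention $E_{i,j}=-E_{j,i}$, so that the symmetry $c^i_{k,j}=c^j_{k,i}$ is stated with the correct indices. I would double-check it on a small case, say $D_1(H_1)=E_{1,2}$, to confirm it. Once that is right, the six-step permutation chain is routine and does not even require $n\geq 4$.
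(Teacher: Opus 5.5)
Your proof is correct, and it takes a different route from the paper's.

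\textbf{The paper's route.} The paper works with mixed brackets. For $k\notin\{i_0,j_0\}$ it picks a fourth index $l\notin\{i_0,j_0,k\}$, which is where $n\ge 4$ is used. It then combines $[E_{i_0,l},H_k]=0$ with $D_1(E_{i_0,l})\in H(n)$ to get $[E_{i_0,l},D_1(H_k)]=0$. A double-bracket coefficient computation then gives $c^{(k)}_{i_0,j_0}=0$. The remaining coefficients $c^{(k)}_{k,j}$ and $c^{(k)}_{i,k}$ are removed separately, using $D_1(H_k)=[E_{k,j},D_1(H_j)]$.

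\textbf{Your route.} You use only the relations $[H_i,H_j]=0$. Conceptually, these say that $D_1|_{H(n)}$ lies in the first prolongation of $\mathfrak{so}(n)\subset\mathfrak{gl}_n$, that is, maps $A$ with $A(u)v=A(v)u$. Your symmetric/skew chain is exactly the classical proof that this prolongation vanishes. Your sign bookkeeping is right: $[E_{k,l},H_j]=\delta_{lj}H_k-\delta_{kj}H_l$ agrees with both the defining relations and the matrix realization. Hence $[D_1(H_i),H_j]=\sum_k c^i_{k,j}H_k$, so $c^i_{k,j}=c^j_{k,i}$ and the six-step chain is valid.

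\textbf{What each route buys.} Yours is shorter and has no case split on whether indices lie in $\{i,j,k\}$. It needs no information about $D_1$ on $\mathfrak{so}(n)$. You do not even need Lemma~\ref{lem:2.1}: $0=D([H_i,H_j])$ directly gives $[p_0D(H_i),H_j]=[p_0D(H_j),H_i]$, because the $H(n)$-components bracket to zero. It also works for every $n\ge 2$. The paper's computation matches the coordinate bracket-chasing used elsewhere in that section, but it depends on the fourth index and on several separate coefficient cases.
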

\begin{proof}
  For each \(1\leq k\leq n\), write \[D_1(H_k)=\sum\limits_{1\leq i<j\leq n}c_{i,j}^{(k)}E_{i,j}.\]
    Fix indices \(i_0,j_0\) with \(1 \leq i_0< j_0\leq n\) and \(k\notin \{i_0,j_0\}\).
    Since \(n\geq 4\), we can choose an index \(l\) such that \(1 \leq l\leq n\) and  \(l\notin \{i_0,j_0,k\}\).
Then \([E_{i_0,l},H_k]=0\).
Because $D_1(E_{i_0,l})\in H(n)$ and Lemma \ref{lem:2.1},
we obtain
    \[
[E_{i_0,l},D_1(H_k)]=D_1([E_{i_0,l},H_k])-[D_1(E_{i_0,l}),H_k]=0.
\] 
 By the following equation
    \begin{align}\label{D_1;ij}
&\left[ E_{i_0,j_0},\left[
[D_1(H_k), E_{i_0,l}],H_{l}
\right]\right] \notag\\
&=\left[ E_{i_0,j_0},\left[
\left[\sum\limits_{1\leq i<j\leq n}c_{i,j}^{(k)}E_{i,j}
, E_{i_0,l}\right],H_{l}
\right]\right]\notag \\
&=\left[ E_{i_0,j_0},\left[
\sum\limits_{1\leq i<j\leq n}c_{i,j}^{(k)}\left[E_{i,j}
, E_{i_0,l}\right],H_{l}
\right]\right] \\
&=\left[ E_{i_0,j_0},\left[
\sum\limits_{1\leq i<j\leq n}c_{i,j}^{(k)}
(\delta_{j,i_0}E_{i,l}-\delta_{l,i}E_{i_0,j}-\delta_{i,i_0}E_{j,l}-\delta_{j,l}E_{i,i_0})
,H_{l}
\right]\right]\notag \\
&=\left[ E_{i_0,j_0},
\sum\limits_{1\leq i<i_0\leq n,\,i\neq l}c_{i,i_0}^{(k)}
H_i
-\sum\limits_{1\leq i_0<j\leq n, j\neq l}c_{i_0,j}^{(k)}H_j
\right]\notag \\
&=-c_{i_0,j_0}^{(k)}H_{i_0}=0,\notag
\end{align}
we have  \(c_{i_0,j_0}^{(k)}=0\).

Next, we consider the coefficients \(c_{k,j}^{(k)}\) for \(j>k \).
In view of Lemma \ref{lem:2.1}, we have 
\begin{align}\label{D_1;kj}
D_1(H_k)&=D_1([E_{k,j},H_j])\notag\\
 &=[D_1(E_{k,j}), H_j] + [E_{k,j}, D_1(H_j)]\notag\\
 &=[E_{k,j}, D_1(H_j)]\notag\\
 &=\left[E_{k,j}, \sum\limits_{1\leq s<t\leq n}c_{s,t}^{(j)}E_{s,t}
 \right]\notag\\
 &=\sum\limits_{1\leq s<t\leq n}c_{s,t}^{(j)}
(\delta_{j,s}E_{k,t}-\delta_{t,k}E_{s,j}-
\delta_{j,t}E_{k,s}-
\delta_{k,s}E_{j,t})\\
&=\sum\limits_{1\leq j<t\leq n}c_{j,t}^{(j)}
E_{k,t}-
\sum\limits_{1\leq s<k\leq n}c_{s,k}^{(j)}
E_{s,j}-
\sum\limits_{1\leq s<j\leq n,\, s\neq k}c_{s,j}^{(j)}E_{k,s}\notag\\
&\hspace{6mm}-
\sum\limits_{1\leq k<t\leq n,\ t\neq j}c_{k,t}^{(j)}
E_{j,t}.\notag
\end{align}
From \cref{D_1;kj}, it implies that \(D_1(H_k)\) does not include any term of the form \(c_{k,j}^{(k)}E_{k,j}\); thus \(c_{k,j}^{(k)} = 0\) for all \(j > k\).

Similarly, we show that \(c_{i,k}^{(k)} = 0\) for all \(i < k\). Consequently, \(D_1(H_k) = 0\) for each \(k = 1, \ldots, n\).
    \end{proof}

\begin{proposition}\label{D_1}
    Let \(D\) be any derivation on \(\mathfrak{e}(n)\). Then \(D_1\) is an inner derivation.
\end{proposition}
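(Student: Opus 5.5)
By Lemma \ref{D1;Hn} we already know $D_1|_{H(n)}=0$. So $D_1$ is determined by its restriction $\varphi:=D_1|_{\mathfrak{so}(n)}:\mathfrak{so}(n)\to H(n)$. The plan is to show that $\varphi$ is a $1$-cocycle of $\mathfrak{so}(n)$ with values in the module $H(n)\cong\mathbb{C}^n$, and then invoke Whitehead's first lemma to conclude that $\varphi=\operatorname{ad}Y|_{\mathfrak{so}(n)}$ for some $Y\in H(n)$. Since $\operatorname{ad}Y$ also vanishes on $H(n)$ (as $H(n)$ is abelian), this gives $D_1=\operatorname{ad}Y$.

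\textbf{Step 1 (cocycle identity).} For $x,y\in\mathfrak{so}(n)$, Lemma \ref{lem:2.1} says $D_1$ is a derivation, so
\[
\varphi([x,y])=D_1([x,y])=[D_1(x),y]+[x,D_1(y)]=x\cdot\varphi(y)-y\cdot\varphi(x),
\]
where $x\cdot h=[x,h]$ denotes the natural action of $\mathfrak{so}(n)$ on $H(n)$. This is exactly the $1$-cocycle condition. The mixed Leibniz relations, with one argument in $H(n)$, are automatic. Indeed, for $x\in\mathfrak{so}(n)$ and $h\in H(n)$ we have $D_1([x,h])=0$, because $[x,h]\in H(n)$. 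On the other side, $[D_1(x),h]+[x,D_1(h)]=[\varphi(x),h]+0=0$, because $H(n)$ is abelian.

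\textbf{Step 2 (Whitehead).} Since $n\ge 4$, $\mathfrak{so}(n)$ is semisimple and $H(n)$ is a finite-dimensional module. Whitehead's first lemma gives $H^1(\mathfrak{so}(n),H(n))=0$. Hence there is $Y\in H(n)$ with $\varphi(x)=x\cdot Y=[x,Y]=-[Y,x]$. Replacing $Y$ by $-Y$, we get $\varphi=\operatorname{ad}Y|_{\mathfrak{so}(n)}$. Therefore $D_1=\operatorname{ad}Y$, which is inner.

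\textbf{Alternative elementary route.} If an explicit argument is preferred over Whitehead's lemma, first write $D_1(E_{i,j})=\sum_k a^{(i,j)}_k H_k$. Applying $D_1$ to $[E_{k,l},E_{i,j}]=0$ for disjoint index pairs shows that $a^{(i,j)}_k=0$ unless $k\in\{i,j\}$. Next, the relation $[E_{i,j},E_{j,k}]=E_{i,k}$ shows that the remaining coefficients are compatible, so that $D_1(E_{i,j})=[E_{i,j},Y]$ with $Y=\sum_k y_kH_k$ read off from these coefficients. The main obstacle in this route is the bookkeeping of signs; the cohomological argument above avoids it. I would present the Whitehead argument as the proof.
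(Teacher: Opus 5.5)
Your proof is correct, but your main route differs from the paper's. The paper never uses the cocycle structure explicitly. It writes $D_1(E_{i,j})=\sum_p b^p_{(i,j)}H_p$ and applies $D_1$ to $[E_{i,j},E_{k,l}]=0$ for disjoint pairs to kill every coefficient with $p\notin\{i,j\}$. It then uses $[E_{k,j},E_{i,j}]=-E_{k,i}$ to derive compatibility relations among the surviving $b$'s. Finally it writes down $Y=-\sum_{j\ge 2}b^1_{(1,j)}H_j-b^2_{(2,1)}H_1$ and checks $D_1=\operatorname{ad}Y$ on each $E_{i,j}$ by hand. This is essentially your ``alternative elementary route.''

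Your argument replaces all of that with two observations: $D_1|_{\mathfrak{so}(n)}\in Z^1(\mathfrak{so}(n),H(n))$, and $H^1(\mathfrak{so}(n),H(n))=0$ by Whitehead's first lemma. What you gain:
\begin{itemize}
\item brevity, with no sign bookkeeping;
\item generality: the same reasoning shows that for any $\mathfrak{g}\ltimes V$ with $\mathfrak{g}$ semisimple and $V$ a finite-dimensional abelian ideal, the $\mathfrak{g}\to V$ component of a derivation is inner;
\item no need for the extra index $l\notin\{i,j,k\}$ (i.e.\ $n\ge 4$) that the paper's disjoint-pairs step uses; this step needs only semisimplicity of $\mathfrak{so}(n)$.
\end{itemize}
Your argument also matches the paper's own Lemma~\ref{D0}, which already uses that derivations of a semisimple algebra are inner (the adjoint-module case of Whitehead's first lemma).

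What the paper's computation buys in return is self-containment and an explicit formula for $Y$ in terms of the coefficients of $D_1$. Both proofs depend equally on Lemma~\ref{D1;Hn} to get $D_1|_{H(n)}=0$.
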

\begin{proof}
    For any pair \((i,j)\) with \(1\leq i\neq j\leq n\),  we write
\[D_1(E_{i,j})=\sum\limits_{1\leq p\leq n}b_{(i,j)}^{p}H_p.\]
Indeed, \(b_{(i,j)}^{p}=-b_{(j,i)}^{p}\).
Then, for any \(
1\leq s<t\leq n\), we have
\begin{equation}\label{D_1;ijst}
[D_1(E_{i,j}),E_{s,t}]\in \mathbb{C}H_s \oplus\mathbb{C} H_t.
\end{equation}

 Choose \(1\leq k\leq n\) with
 \(k\notin \{i,j\}\), then there exists $l$ such that \(l\notin \{i,j,k\}\). Then  \([E_{i,j},E_{k,l}]=0\) and hence
\begin{align}\label{D_1;ijkl}
    [D_1(E_{i,j}),E_{k,l}]=-[E_{i,j},D_1(E_{k,l})].
\end{align}
Combining \cref{D_1;ijst,D_1;ijkl},  we obtain
\[
[D_1(E_{i,j}),E_{k,l}] =b_{(i,j)}^{k}H_l-b_{(i,j)}^{l}H_k=0,
\]
which implies that \(b_{(i,j)}^{k}=0\) for all \(k\neq i,j\). Consequently,
\(
D_1(E_{i,j})=b_{(i,j)}^{i}H_i+b_{(i,j)}^{j}H_j.
\)

Since \([E_{k,j}, E_{i,j}] = -E_{k,i}\) for any \(k \notin \{i,j\}\), we have
\begin{align}
D_1(E_{k,i}) &= b_{(k,i)}^{k}H_k + b_{(k,i)}^{i}H_i \nonumber\\
&= -[D_1(E_{k,j}), E_{i,j}] - [E_{k,j}, D_1(E_{i,j})] \nonumber\\
&= -[b_{(k,j)}^{k}H_k + b_{(k,j)}^{j}H_j,\, E_{i,j}] - [E_{k,j},\, b_{(i,j)}^{i}H_i + b_{(i,j)}^{j}H_j] \\
&= b_{(k,j)}^{j}H_i - b_{(i,j)}^{j}H_k.\nonumber
\end{align}
Comparing the coefficients of \(H_i\) and \(H_k\) in the expressions for \(D_1(E_{k,i})\), one yields
\begin{align}\label{eq:b}
b_{(k,i)}^{i} = b_{(k,j)}^{j}, \quad b_{(k,i)}^{k} = -b_{(i,j)}^{j} = b_{(j,i)}^{j}.
\end{align}

Let \(Y=-\sum\limits_{2\leq j\leq n}b_{(1,j)}^{1}H_j-b_{(2,1)}^{2}H_1\).
 Assume that \(1<i<j\leq n\). Then for \cref{eq:b}, we have
  \begin{align}\label{ad-ij}
      \text{ad}(Y)(E_{i,j})&=[-b_{(1,i)}^{1}H_i-b_{(1,j)}^{1}H_j,E_{i,j}]\nonumber\\
      &=b_{(1,j)}^{1}H_i-b_{(1,i)}^{1}H_j\nonumber\\
      &=b_{(i,j)}^{i}H_i-b_{(j,i)}^{j}H_j\\
&=b_{(i,j)}^{i}H_i+b_{(i,j)}^{j}H_j=D_1(E_{i,j}).\nonumber
  \end{align} 
    Suppose that \(1=i<j\leq n\). Then for \cref{eq:b}, we obtain 
   \begin{align}\label{ad;1j}
      \text{ad}(Y)(E_{1,j})&=[-b_{(2,1)}^{2}H_1
    -b_{(1,j)}^{1}H_j,E_{1,j}]\nonumber\\
      &=b_{(1,j)}^{1}H_1-b_{(2,1)}^{2}H_j\\
&=b_{(1,j)}^{1}H_1+b_{(1,j)}^{j}H_j=D_1(E_{1,j}).\nonumber
  \end{align}  
  It then follows from  Lemma \ref{D1;Hn} and \cref{ad-ij,ad;1j}  that 
  \[D_1=\text{ad}\,Y,\]
   thereby completing the proof.
\end{proof}
 Together with Lemma \ref{D0}, Lemma \ref{D1;Hn}, and Proposition \ref{D_1}, we obtain the following theorem, which is the main result of this section.

\begin{theorem}\label{Der-en}
\label{thm:Der}
\( \operatorname{Der}(\mathfrak{e}(n)) = \operatorname{IDer}(\mathfrak{e}(n)) \oplus \mathbb{C} \delta \).
\end{theorem}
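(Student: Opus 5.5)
The plan is to assemble the theorem from the decomposition $D=D_0+D_1$ of Lemma~\ref{lem:2.1} together with the structural results already proved, and then to check separately that the sum on the right-hand side is direct.

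\emph{Inclusion $\supseteq$.} Inner derivations are derivations, and $\delta$ is a derivation, as noted in the proof of Lemma~\ref{D0}. One can check this directly: for $x_0,y_0\in\mathfrak{so}(n)$ and $h\in H(n)$ we have $\delta([x_0,y_0])=0$ and $\delta([x_0,h])=[x_0,h]=[x_0,\delta(h)]$, while brackets of two elements of $H(n)$ vanish. Hence $\operatorname{IDer}(\mathfrak{e}(n))+\mathbb{C}\delta\subseteq\operatorname{Der}(\mathfrak{e}(n))$.

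\emph{Inclusion $\subseteq$.} Take $D\in\operatorname{Der}(\mathfrak{e}(n))$ and write $D=D_0+D_1$ by Lemma~\ref{lem:2.1}. Lemma~\ref{D0} gives $D_0=\operatorname{ad}X+\lambda\delta$ with $X\in\mathfrak{so}(n)$ and $\lambda\in\mathbb{C}$. Proposition~\ref{D_1}, which rests on Lemma~\ref{D1;Hn}, gives $D_1=\operatorname{ad}Y$ with $Y\in H(n)$. Therefore
\[
D=\operatorname{ad}(X+Y)+\lambda\delta\in\operatorname{IDer}(\mathfrak{e}(n))+\mathbb{C}\delta .
\]

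\emph{Directness.} This is the only step needing a new argument. Suppose $\lambda\delta=\operatorname{ad}Z$ for some $Z=Z_0+Z_1$ with $Z_0\in\mathfrak{so}(n)$ and $Z_1\in H(n)$. Evaluating on $\mathfrak{so}(n)$ gives $0=[Z_0,x]+[Z_1,x]$ for all $x\in\mathfrak{so}(n)$. The first term lies in $\mathfrak{so}(n)$ and the second in $H(n)$, so both vanish. Since $\mathfrak{so}(n)$ is semisimple for $n\ge 4$, its center is zero and hence $Z_0=0$. Evaluating on $H_j$ then gives $\lambda H_j=[Z_1,H_j]=0$, because $H(n)$ is abelian. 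So $\lambda=0$, and then $\operatorname{ad}Z=0$ as well. This shows $\operatorname{IDer}(\mathfrak{e}(n))\cap\mathbb{C}\delta=0$, and the sum is direct.

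One might also note that $\mathfrak{e}(n)$ has trivial center: an element $Z_1\in H(n)$ with $[Z_1,\mathfrak{so}(n)]=0$ must vanish, for instance by using $[E_{i,j},H_j]=H_i$. This gives $\operatorname{IDer}(\mathfrak{e}(n))\cong\mathfrak{e}(n)$ and $\dim\operatorname{Der}(\mathfrak{e}(n))=\dim\mathfrak{e}(n)+1$, although the theorem itself does not require it.
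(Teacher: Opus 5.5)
Your proposal is correct and follows the paper's route: decompose $D=D_0+D_1$ via Lemma~\ref{lem:2.1}, then use Lemma~\ref{D0}, Lemma~\ref{D1;Hn} and Proposition~\ref{D_1} to obtain $D=\operatorname{ad}(X+Y)+\lambda\delta$. Your explicit check that $\operatorname{IDer}(\mathfrak{e}(n))\cap\mathbb{C}\delta=0$ is a worthwhile addition, since the paper asserts the direct sum without verifying it.
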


\section{Local derivations on \(\mathfrak{e}(n)\)}\label{Sec4 locDer}
In this section, we determine the local derivations of $\mathfrak{e}(n)$.

    For any local derivation $\Delta$ on $\mathfrak{e}(n)$, by Theorem \ref{Der-en}, we have 
    \begin{align*}
        \Delta(x)=[t_x,x]+\lambda_x\delta(x)\quad \text{for all} \; x\in \mathfrak{e}(n).
    \end{align*}
Then we have the following Lemma for later use.
\begin{lemma}\label{lem:4.1}
    Let \(\Delta\) be a local derivation on \(\mathfrak{e}(n)\). Then
    \begin{enumerate}[(i)]
        \item For all \(1 \leq i< j\leq n\),
        \(\Delta(E_{i,j})\in \sum\limits_{s\neq i,j}\mathbb{C}E_{s,i}+\sum\limits_{s\neq i,j}\mathbb{C}E_{s,j}+\mathbb{C}H_i+\mathbb{C}H_j\).
        \item
        For all \(1 \leq k \leq n\), \(\Delta(H_k)\in \sum\limits_{1\leq l\leq n}\mathbb{C}H_{l}\).
    \end{enumerate}
\end{lemma}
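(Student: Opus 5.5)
The plan is to use Theorem~\ref{Der-en}: every derivation has the form $\operatorname{ad}t+\lambda\delta$. Hence for each $x\in\mathfrak{e}(n)$ there are $t_x\in\mathfrak{e}(n)$ and $\lambda_x\in\mathbb{C}$ with $\Delta(x)=[t_x,x]+\lambda_x\delta(x)$. It therefore suffices to compute, for each basis vector $x$, the image of $\operatorname{ad}x$ together with $\delta(x)$, and check that their span lies in the subspace claimed in the statement. No linearity of $\Delta$ across different basis vectors is needed, since each item concerns a single basis element.

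For (i), fix $1\le i<j\le n$ and write $t_{E_{i,j}}=\sum_{s<t}a_{s,t}E_{s,t}+\sum_{p}h_pH_p$. Because $\delta(E_{i,j})=0$, we get $\Delta(E_{i,j})=[t_{E_{i,j}},E_{i,j}]$, and I will examine each basis bracket separately.
\begin{itemize}
\item If $\{s,t\}\cap\{i,j\}=\emptyset$, then $[E_{s,t},E_{i,j}]=0$.
\item If $\{s,t\}=\{i,j\}$, the bracket is again $0$.
\item If exactly one index is shared, the structure relation $[E_{a,b},E_{b,c}]=E_{a,c}$, together with antisymmetry $E_{a,b}=-E_{b,a}$, gives a multiple of $E_{s',i}$ or $E_{s',j}$, where $s'\notin\{i,j\}$ is the unshared index.
\end{itemize}
For the translation part, $[H_p,E_{i,j}]$ vanishes unless $p\in\{i,j\}$, and $[H_i,E_{i,j}]=H_j$, $[H_j,E_{i,j}]=-H_i$. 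Collecting these terms places $\Delta(E_{i,j})$ in $\sum_{s\ne i,j}\mathbb{C}E_{s,i}+\sum_{s\ne i,j}\mathbb{C}E_{s,j}+\mathbb{C}H_i+\mathbb{C}H_j$, as claimed. In fact the translation part contributes only to the $H_i,H_j$ components, so the claimed subspace is, if anything, slightly generous.

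For (ii), $\delta(H_k)=H_k\in H(n)$. Also $[t,H_k]\in H(n)$ for every $t$, because $H(n)$ is an abelian ideal: rotations map $H(n)$ into itself, and translations commute with $H_k$. Hence $\Delta(H_k)\in\sum_l\mathbb{C}H_l$.

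The only step requiring care is the sign and index bookkeeping in the $\mathfrak{so}(n)$ part of (i), where the brackets $[E_{s,t},E_{i,j}]$ must be rewritten via antisymmetry in the form $E_{s',i}$ or $E_{s',j}$. Since the statement only asserts membership in a span, the signs are irrelevant and only the index pattern matters, so I expect no genuine obstacle.
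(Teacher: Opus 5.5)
Your proposal is correct and follows the paper's intended route. The paper states this lemma immediately after writing \(\Delta(x)=[t_x,x]+\lambda_x\delta(x)\) via Theorem~\ref{Der-en} and leaves the bracket bookkeeping implicit; you carry out exactly that bookkeeping for \(x=E_{i,j}\) and \(x=H_k\).

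Your closing remark that the subspace in (i) is \emph{slightly generous} is unnecessary, though harmless. The relations \([E_{s,i},E_{i,j}]=E_{s,j}\), \([E_{s,j},E_{i,j}]=-E_{s,i}\), \([H_i,E_{i,j}]=H_j\) and \([H_j,E_{i,j}]=-H_i\) show that this subspace is exactly \([\mathfrak{e}(n),E_{i,j}]\).
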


\begin{lemma}\label{lem:localD0}
Let \(\Delta\) be a local derivation on \(\mathfrak{e}(n)\) such that \(\Delta(\mathfrak{so}(n))\subseteq
H(n)\). 
\begin{enumerate}[(i)]
    \item For \(n=2m, m\geq 2\), if \(\Delta\left(\sum_{i=1}^{m}E_{2i-1,2i}\right)=0\), then \(\Delta\mid_{\mathfrak{so}(n)}=0\).
    \item For \(n=2m+1, m\geq 2\), if \(\Delta\left(\sum_{i=1}^{m}E_{2i-1,2i}\right)=0\), then \((\Delta- \lambda  
    \,\text{ad}H_{n})
\mid_{\mathfrak{so}(n)}=\) with some \(\lambda\in \mathbb{C}\).
\end{enumerate}
\end{lemma}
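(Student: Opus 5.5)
The plan is to reduce the statement to a problem in linear algebra about maps $\mathfrak{so}(n)\to H(n)$, and then use the hypothesis on $h:=\sum_{i=1}^{m}E_{2i-1,2i}$ to pin down the remaining freedom. In (ii) I read the missing right-hand side as $0$, so the claim is $(\Delta-\lambda\,\text{ad}H_n)|_{\mathfrak{so}(n)}=0$.

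\emph{Step 1: describe $\Delta(x)$ for $x\in\mathfrak{so}(n)$.} By Theorem~\ref{Der-en}, $\Delta(x)=[t_x,x]+\lambda_x\delta(x)$, and $\delta(x)=0$. Since $\Delta(x)\in H(n)$, only the $H(n)$-component of $t_x$ matters. Identifying $H(n)$ with $\mathbb{C}^n$, on which $\mathfrak{so}(n)$ acts by skew-symmetric matrices, this gives $\Delta(x)=-x\cdot v_x$ for some $v_x\in H(n)$.

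So $\varphi:=\Delta|_{\mathfrak{so}(n)}$ is a linear map $\mathfrak{so}(n)\to H(n)$ with $\varphi(x)\in x\cdot H(n)$ for every $x$. I aim to show that there is a single $Y\in H(n)$ with $\varphi(x)=[Y,x]$ for all $x\in\mathfrak{so}(n)$.

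\emph{Step 2: use test elements to get linear relations.}
\begin{itemize}
\item For $x=E_{i,j}$ the image is $\mathbb{C}H_i+\mathbb{C}H_j$. Hence $\varphi(E_{i,j})=a_{(i,j)}^{i}H_i+a_{(i,j)}^{j}H_j$, which is consistent with Lemma~\ref{lem:4.1}(i).
\item For $x=E_{i,j}+E_{j,k}$ with $i,j,k$ distinct, $x$ acts as a rank-two skew $3\times3$ block. Its image is the orthogonal complement of its kernel vector $w=H_i+H_k$ with respect to the standard bilinear form. The condition $\varphi(E_{i,j})+\varphi(E_{j,k})\perp w$ is then a single linear equation among the $a$'s.
\item Similar elements, such as $E_{i,j}+E_{i,k}$ and $E_{i,j}+E_{k,i}$, give further equations.
\end{itemize}
I expect these to yield relations of exactly the shape of \eqref{eq:b} in Proposition~\ref{D_1}: the coefficient $a_{(k,i)}^{i}$ is independent of $i$, and so on. Then, as in the proof of Proposition~\ref{D_1}, I will write down an explicit $Y\in H(n)$ built from the $a_{(1,j)}^{1}$ and $a_{(2,1)}^{2}$, and check that $\text{ad}\,Y$ agrees with $\varphi$ on every $E_{i,j}$. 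By linearity, $\varphi=\text{ad}\,Y|_{\mathfrak{so}(n)}$.

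\emph{Step 3: apply the hypothesis $\Delta(h)=0$.} Step 2 gives $[Y,h]=0$, that is, $h\cdot Y=0$ with $h$ acting on $\mathbb{C}^n$.
\begin{itemize}
\item For $n=2m$, $h$ is a block sum of invertible $2\times2$ blocks, so $h$ is invertible and $Y=0$. Hence $\Delta|_{\mathfrak{so}(n)}=0$.
\item For $n=2m+1$, the kernel of $h$ is $\mathbb{C}H_n$. Hence $Y=\lambda H_n$ and $(\Delta-\lambda\,\text{ad}H_n)|_{\mathfrak{so}(n)}=0$.
\end{itemize}

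\emph{Main obstacle.} The crux is Step 2: choosing enough test elements to force the coefficients $a_{(i,j)}^{i}$ to come from a single vector $Y$. The sign bookkeeping must be done carefully against the convention $E_{i,j}=-E_{j,i}$. I would first try only the three-index elements $E_{i,j}+E_{j,k}$, which yield relations of the form \eqref{eq:b}. If these turn out to be insufficient, I would add elements supported on four indices, which is where the hypothesis $n\ge4$ can be used. Elements like $E_{i,j}+E_{k,l}$ are invertible on their $4$-dimensional support and so give no constraint. The useful four-index elements are singular ones such as $E_{i,j}+E_{j,k}+E_{k,l}$, whose kernel is nontrivial, or the three-index combinations within the various index triples.
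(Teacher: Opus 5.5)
Your proposal is correct, but it is organized differently from the paper.

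\textbf{The paper's route.} It uses the hypothesis first. Since $\Delta(E_{2i-1,2i})\in\mathbb{C}H_{2i-1}+\mathbb{C}H_{2i}$ and these spans are independent, $\Delta(h)=0$ forces $\Delta(E_{2i-1,2i})=0$. It then propagates these zeros to every $E_{s,t}$ through four parity cases, using test elements such as $E_{s-1,s}+E_{s,t}$ and $E_{s,t}+E_{t,t+1}$. When $n$ is odd, the unpaired index $n$ gets a separate treatment, with the correction term $b^{1}_{(1,n)}\,\text{ad}H_n$.

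\textbf{Your route.} You first prove a structural fact that needs no normalization: $\Delta|_{\mathfrak{so}(n)}=\text{ad}\,Y|_{\mathfrak{so}(n)}$ for a single $Y\in H(n)$. You use the hypothesis only at the end, through $\ker h$.

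\textbf{Your Step 2 is immediate.} The step you flag as the main obstacle already follows from the test elements you chose. Write $\Delta(E_{i,j})=a^{i}_{(i,j)}H_i+a^{j}_{(i,j)}H_j$. Orthogonality of $\Delta(E_{i,j}+E_{j,k})$ to $H_i+H_k$ reads $a^{i}_{(i,j)}+a^{k}_{(j,k)}=0$, that is, $a^{i}_{(i,j)}=a^{k}_{(k,j)}$. So the $H_i$-coefficient of $\Delta(E_{i,j})$ depends only on $j$; call it $-y_j$. Using $a^{j}_{(i,j)}=-a^{j}_{(j,i)}=y_i$, you get $\Delta(E_{i,j})=-y_jH_i+y_iH_j=[Y,E_{i,j}]$ with $Y=\sum_k y_kH_k$. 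No four-index elements are needed, and this step already works for $n\geq 3$.

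\textbf{Comparison.} Both proofs rest on the same kind of test element: two root vectors sharing one index. Your route buys three things:
\begin{itemize}
\item It is uniform, with no parity cases.
\item It explains why the answer is $0$ for even $n$ and $\mathbb{C}\,\text{ad}H_n$ for odd $n$: $h$ is invertible in the first case, while $\ker h=\mathbb{C}H_n$ in the second.
\item It gives a stronger intermediate result. In Theorem~\ref{thm:locDer} one could subtract $\text{ad}\,Y$ directly instead of normalizing by $D_u$. As written there, the normalization also requires choosing $D_u$ with no $\mathfrak{so}(n)$-component, so that $\Delta'_1$ still satisfies this lemma's hypothesis $\Delta'_1(\mathfrak{so}(n))\subseteq H(n)$.
\end{itemize}
The paper's route never has to identify a global $Y$, and each step is an explicit coefficient comparison against a specific test element.
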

\begin{proof}
    By \(\Delta(\mathfrak{so}(n)\subseteq H(n)\) and  Lemma \ref{lem:4.1}, we can assume that
\begin{align}\label{ld:Eij}
\Delta(E_{i,j})=
b^{i}_{(i,j)}H_{i}+b^{j}_{(i,j)}H_{j},
\end{align}
for all \(1\leq i<j\leq n\).

(i) By \( \Delta(\sum_{i=1}^{m}E_{2i-1,2i})=0\) and \cref{ld:Eij},  we obtain 
\begin{align*}
0&=\Delta(\sum_{i=1}^{m}E_{2i-1,2i})\\
&=\sum_{i=1}^{m}(b_{(2i-1,2i)}^{2i-1}H_{2i-1}+b_{(2i-1,2i)}^{2i}H_{2i}).
\end{align*}
Then \(b_{(2i-1,2i)}^{2i-1}=b_{(2i-1,2i)}^{2i}=0\) for all \(1 \leq i
\leq m\), which deduces \[\Delta(E_{2i-1,2i})=0,  \ \text{for}\ 1 \leq i\leq m.\]

Next, we show that \(\Delta(E_{s,t})=0\) for all  \(1 \leq s<t\leq n \).
We divide the proof into four case: (1) \(s,t\) are even, (2) \(s\) is even, \(t\) is odd,  (3) \(s\) is odd, \(t\) is  even, (4) \(s,t\) are odd.  

\textbf{Case 1.} \(s,t\) are even. 

Let \(x=E_{s-1,s}+ E_{s,t}\). 
By \(\Delta(E_{s-1,s})=0\) and \cref{ld:Eij}, we have
\begin{align*}
b_{(s,t)}^{s}H_s+b_{(s,t)}^{t}H_t&=\Delta(E_{s,t})=\Delta(E_{s-1,s}+E_{s,t})\\
&=D_x(E_{s-1,s}+E_{s,t})\\
&=(\sum_{1\leq i
< j\leq n}\lambda^{x}_{i,j}\,\text{ad}E_{i,j}+\sum_{1\leq k
\leq  n}\mu^{x}_{k}\,\text{ad}H_k+\lambda_x\delta
)(E_{s-1,s}+E_{s,t})\\
&=-\mu^x_sH_{s-1}+\mu^x_{s-1}H_{s}-\mu^x_tH_{s}+\mu^x_{s}H_{t}.
\end{align*}
Then \(b^{t}_{(s,t)}=\mu_{s}^{x}=0\), which deduces that \(\Delta(E_{s,t})\in \mathbb{C}H_{s}\).

Let \(y= E_{s,t}+E_{t-1,t}\). 
By \(\Delta(E_{t-1,t})=0\) and \cref{ld:Eij}, we have
\begin{align*}
b_{(s,t)}^{s}H_s&=\Delta(E_{s,t})=\Delta(E_{s,t}+E_{t-1,t})\\
&=D_y(E_{s,t}+E_{t-1,t})\\
&=(\sum_{1\leq i
< j\leq n}\lambda^{y}_{i,j}\,\text{ad}E_{i,j}+\sum_{1\leq k
\leq  n}\mu^{y}_{k}\,\text{ad}H_k+\lambda_y\delta
)(E_{s,t}+E_{t-1,t})\\
&=-\mu^y_tH_{t-1}+\mu^y_{t-1}H_{t}-\mu^y_tH_{s}+\mu^y _{s}H_{t}.
\end{align*}
Then \(b^{s}_{(s,t)}=-\mu_{t}^{y}=0\), which deduces that \(\Delta(E_{s,t})\)=0.

\textbf{Case 2.} \(s\) is even, \(t\) is odd. 

Let \(x=E_{s-1,s}+ E_{s,t}\). 
By \(\Delta(E_{s-1,s})=0\) and \cref{ld:Eij}, we have
\begin{align*}
b_{(s,t)}^{s}H_s+b_{(s,t)}^{t}H_t&=\Delta(E_{s,t})=\Delta(E_{s-1,s}+E_{s,t})\\
&=D_x(E_{s-1,s}+E_{s,t})\\
&=(\sum_{1\leq i
< j\leq n}\lambda^{x}_{i,j}\,\text{ad}E_{i,j}+\sum_{1\leq k
\leq  n}\mu^{x}_{k}\,\text{ad}H_k+\lambda_x\delta
)(E_{s-1,s}+E_{s,t})\\
&=-\mu^x_sH_{s-1}+\mu^x_{s-1}H_{s}-\mu^x_tH_{s}+\mu^x_{s}H_{t}.
\end{align*}
Then \(b^{t}_{(s,t)}=\mu_{s}^{x}=0\), which deduces that \(\Delta(E_{s,t})\in \mathbb{C}H_{s}\).

Let \(y= E_{s,t}+E_{t,t+1}\). 
By \(\Delta(E_{t,t+1})=0\) and \cref{ld:Eij}, we have
\begin{align*}
b_{(s,t)}^{s}H_s&=\Delta(E_{s,t})=\Delta(E_{s,t}+E_{t,t+1})\\
&=D_y(E_{s,t}+E_{t,t+1})\\
&=(\sum_{1\leq i
< j\leq n}\lambda^{y}_{i,j}\,\text{ad}E_{i,j}+\sum_{1\leq k
\leq  n}\mu^{y}_{k}\,\text{ad}H_k+\lambda_y\delta
)(E_{s,t}+E_{t,t+1})\\
&=-\mu^y_tH_{s}+\mu^y _{s}H_{t}-\mu^y_{t+1}H_{t}+\mu^y_{t}H_{t+1}.
\end{align*}
Then \(b^{s}_{(s,t)}=-\mu_{t}^{y}=0\), which deduces that \(\Delta(E_{s,t})\)=0. 

\textbf{Case 3.} \(s\) is odd,\  \(t\) is even.  
If \(t=s+1\),\quad \(\Delta(E_{s,t})=\Delta(E_{s,s+1})=0\).
 Suppose that \(t>s+1\).

Let \(x=E_{s,s+1}+ E_{s,t}\). 
By \(\Delta(E_{s,s+1})=0\) and 
\cref{ld:Eij}, we have
\begin{align*}
b_{(s,t)}^{s}H_s+b_{(s,t)}^{t}H_t&=\Delta(E_{s,t})=\Delta(E_{s,s+1}+E_{s,t})\\
&=D_x(E_{s,s+1}+E_{s,t})\\
&=(\sum_{1\leq i
< j\leq n}\lambda^{x}_{i,j}\,\text{ad}E_{i,j}+\sum_{1\leq k
\leq  n}\mu^{x}_{k}\,\text{ad}H_k+\lambda_x\delta
)(E_{s,s+1}+E_{s,t})\\
&=-\mu^x_{s+1}H_{s}+\mu^x_{s}H_{s+1}-\mu^x_tH_{s}+\mu^x_{s}H_{t}.
\end{align*}
Then \(b^{t}_{(s,t)}=\mu_{s}^{x}=0\), which deduces that \(\Delta(E_{s,t})\in \mathbb{C}H_{s}\).

Let \(y= E_{s,t}+E_{t-1,t}\). 
By \(\Delta(E_{t-1,t})=0\) and \cref{ld:Eij}, we have
\begin{align*}
b_{(s,t)}^{s}H_s&=\Delta(E_{s,t})=\Delta(E_{s,t}+E_{t-1,t})\\
&=D_y(E_{s,t}+E_{t-1,t})\\
&=(\sum_{1\leq i
< j\leq n}\lambda^{y}_{i,j}\,\text{ad}E_{i,j}+\sum_{1\leq k
\leq  n}\mu^{y}_{k}\,\text{ad}H_k+\lambda_y\delta
)(E_{s,t}+E_{t-1,t})\\
&=-\mu^y_tH_{s}+\mu^y _{s}H_{t}-\mu^y_{t}H_{t-1}+\mu^y_{t-1}H_{t}.
\end{align*}
Then \(b^{s}_{(s,t)}=-\mu_{t}^{y}=0\), which deduces that \(\Delta(E_{s,t})\)=0.

\textbf{Case 4.} \(s,t\) are  odd.

Let \(x=E_{s,s+1}+ E_{s,t}\). 
By \(\Delta(E_{s,s+1})=0\) and \cref{ld:Eij}, we have
\begin{align*}
b_{(s,t)}^{s}H_s+b_{(s,t)}^{t}H_t&=\Delta(E_{s,t})=\Delta(E_{s,s+1}+E_{s,t})\\
&=D_x(E_{s,s+1}+E_{s,t})\\
&=(\sum_{1\leq i
< j\leq n}\lambda^{x}_{i,j}\,\text{ad}E_{i,j}+\sum_{1\leq k
\leq  n}\mu^{x}_{k}\,\text{ad}H_k+\lambda_x\delta
)(E_{s,s+1}+E_{s,t})\\
&=-\mu^x_{s+1}H_{s}+\mu^x_{s}H_{s+1}-\mu^x_tH_{s}+\mu^x_{s}H_{t}.
\end{align*}
Then \(b^{t}_{(s,t)}=\mu_{s}^{x}=0\), which deduces that \(\Delta(E_{s,t})\in \mathbb{C}H_{s}\).

Let \(y= E_{s,t}+E_{t,t+1}\). 
By \(\Delta(E_{t,t+1})=0\) and \cref{ld:Eij}, we have
\begin{align*}
b_{(s,t)}^{s}H_s&=\Delta(E_{s,t})=\Delta(E_{s,t}+E_{t,t+1})\\
&=D_y(E_{s,t}+E_{t,t+1})\\
&=(\sum_{1\leq i
< j\leq n}\lambda^{y}_{i,j}\,\text{ad}E_{i,j}+\sum_{1\leq k
\leq  n}\mu^{y}_{k}\,\text{ad}H_k+\lambda_y\delta
)(E_{s,t}+E_{t,t+1})\\
&=-\mu^y_tH_{s}+\mu^y _{s}H_{t}-\mu^y_{t+1}H_{t}+\mu^y_{t}H_{t+1}.
\end{align*}
Then \(b^{s}_{(s,t)}=-\mu_{t}^{y}=0\), which deduces that \(\Delta(E_{s,t})\)=0.

Overall, this completes the proof.

(ii) By the proof of (i), we have 
\[\Delta(E_{i,j})=0, \  \text{for all } \  1 \leq i<j\leq n-1.\]
Assume that \(\Delta(E_{1,n})=b^1_{(1,n)}H_1+b^n_{(1,n)}H_n\). Set \(\Delta'=\Delta+b^1_{(1,n)}\text{ad}H_n\), then \(\Delta'\) is also a local derivation on \(\mathfrak{e}(n)\) that satisfies the following: 
\[
\Delta'(\mathfrak{so}(n))\subseteq H(n),  \ \Delta'(\sum_{i=1}^{m}E_{2i-1,2i})=0, \ 
\Delta'(E_{1,n})=b_{(1,n)}^{n}H_n.
\]
Then \(\Delta'(E_{i,j})=0\) for all \(1\leq i<j\leq n-1\).

Next, we show that \(\Delta'(E_{i,n})=0\) for all \(1\leq i<n\). 
For \(i=1\), let \(x=E_{1,2}+E_{1,n}\),
then
\begin{align*}
b_{(1,n)}^{n}H_n&=\Delta'(E_{1,n})=\Delta'(E_{1,2}+E_{1,n})\\
&=D_x(E_{1,2}+E_{1,n})\\
&=(\sum_{1\leq i
< j\leq n}\lambda^{x}_{i,j}\,\text{ad}E_{i,j}+\sum_{1\leq k
\leq  n}\mu^{x}_{k}\,\text{ad}H_k+\lambda_x\delta
)(E_{1,2}+E_{1,n})\\
&=-\mu^x_2H_{1}+\mu^x _{1}H_{2}-\mu^x_{n}H_{1}+\mu^x_{1}H_{n}.
\end{align*}
This implies that \(b^{1}_{(1,n)}=\mu_{1}^{x}=0\), so \(\Delta'(E_{1,n})=0\).

For \(2\leq i\leq n-1\),  we assume that
\[
\Delta'(E_{i,n})=c_{(i,n)}^iH_i+c_{(i,n)}^nH_n.
\]
Let \(y=E_{1,n}+E_{i,n}\), then
\begin{align*}
c_{(i,n)}^{i}H_i+c_{(i,n)}^{n}H_n&=\Delta'(E_{i,n})=\Delta'(E_{1,n}+E_{i,n})\\
&=D_y(E_{1,n}+E_{i,n})\\
&=(\sum_{1\leq i
< j\leq n}\lambda^{y}_{i,j}\,\text{ad}E_{i,j}+\sum_{1\leq k
\leq  n}\mu^{y}_{k}\,\text{ad}H_k+\lambda_y\delta
)(E_{1,n}+E_{i,n})\\
&=-\mu^y_nH_{1}+\mu^y _{1}H_{n}-\mu^y_{n}H_{i}+\mu^y_{i}H_{n}.
\end{align*}
This implies that \(c^{i}_{(i,n)}=-\mu_{n}^{y}=0\), so \(\Delta'(E_{i,n})=c_{(i,n)}^nH_n\).
If \(i\) is odd, 
let \(z_1=E_{i,i+1}+E_{i,n}\), then
\begin{align*}
c_{(i,n)}^{n}H_n&=\Delta'(E_{i,n})=\Delta'(E_{i,i+1}+E_{i,n})\\
&=D_{z_1}(E_{i,i+1}+E_{i,n})\\
&=(\sum_{1\leq i
< j\leq n}\lambda^{z_1}_{i,j}\,\text{ad}E_{i,j}+\sum_{1\leq k
\leq  n}\mu^{z_1}_{k}\,\text{ad}H_k+\lambda_{z_1}\delta
)(E_{i,i+1}+E_{i,n})\\
&=-\mu^{z_1}_{i+1}H_{i}+\mu^{z_1}_{i}H_{i+1}-\mu^{z_1}_{n}H_{i}+\mu^{z_1}_{i}H_{n}.
\end{align*}
This implies that \(c^{n}_{(i,n)}=\mu_{i}^{z_1}=0\). 

If \(i\) is even, 
let \(z_2=E_{i-1,i}+E_{i,n}\), then
\begin{align*}
c_{(i,n)}^{n}H_n&=\Delta'(E_{i,n})=\Delta'(E_{i-1,i}+E_{i,n})\\
&=D_{z_2}(E_{i-1,i}+E_{i,n})\\
&=(\sum_{1\leq i
< j\leq n}\lambda^{z_2}_{i,j}\,\text{ad}E_{i,j}+\sum_{1\leq k
\leq  n}\mu^{z_2}_{k}\,\text{ad}H_k+\lambda_{z_2}\delta
)(E_{i-1,i}+E_{i,n})\\
&=-\mu^{z_2}_{i}H_{i-1}+\mu^{z_2}_{i-1}H_{i}-\mu^{z_2}_{n}H_{i}+\mu^{z_2}_{i}H_{n}.
\end{align*}
This implies that \(c^{n}_{(i,n)}=\mu_{i}^{z_2}=0\). So \(\Delta'(E_{i,n})=0\) for \( 2 \leq i \leq n-1\).

  Overall, one deduces that \(\Delta'(\mathfrak{so}(n))=(\Delta+b_{(1,n)}^1\text{ad}H_{n})(\mathfrak{so}(n))=0\), 
  as desired. 
\end{proof}
\begin{lemma}\label{lem:localD1}
    Let \(\Delta\) be a local derivation on \(\mathfrak{e}(n)\) such that \(\Delta(\mathfrak{so}(n))=0
    \). Then  \(\Delta=\lambda\,\delta
    \) for some \(\lambda\in \mathbb{C}\).
\end{lemma}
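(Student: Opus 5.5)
We will show that $\Delta$ acts on $H(n)$ by a single scalar, using only the local condition at well-chosen elements $x\in\mathfrak{e}(n)$. Since $\Delta(\mathfrak{so}(n))=0$, it suffices to show $\Delta|_{H(n)}=\lambda\,\mathrm{Id}_{H(n)}$. By Theorem~\ref{Der-en}, for every $x$ there are $t_x=X_x+Y_x$ with $X_x\in\mathfrak{so}(n)$, $Y_x\in H(n)$, and $\lambda_x\in\mathbb{C}$ such that $\Delta(x)=[t_x,x]+\lambda_x\delta(x)$.

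\textbf{Step 1: each $H_k$ is an eigenvector.} By Lemma~\ref{lem:4.1}(ii), write $\Delta(H_k)=\sum_l a_{k,l}H_l$. For $x=H_k$ we get $\Delta(H_k)=[X_x,H_k]+\lambda_xH_k$. Since $[E_{i,j},H_k]\in\mathbb{C}H_i+\mathbb{C}H_j$ vanishes unless $k\in\{i,j\}$, the term $[X_x,H_k]$ can be any vector in $\operatorname{span}\{H_l: l\neq k\}$, so this alone does not force $a_{k,l}=0$ for $l\neq k$. We therefore use mixed elements. Take $x=E_{i,j}+H_k$ with $k\notin\{i,j\}$ (possible since $n\ge 4$). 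Then
\[
\Delta(x)=\Delta(H_k)=[X_x,E_{i,j}]+[Y_x,E_{i,j}]+[X_x,H_k]+\lambda_xH_k.
\]
The $\mathfrak{so}(n)$-component must vanish, so $[X_x,E_{i,j}]=0$, i.e. $X_x$ lies in the centralizer of $E_{i,j}$. That centralizer is spanned by $E_{i,j}$ together with the $E_{p,q}$ with $p,q\notin\{i,j\}$. Hence $[X_x,H_k]\in\operatorname{span}\{H_l: l\notin\{i,j\}\}$, while $[Y_x,E_{i,j}]\in\mathbb{C}H_i+\mathbb{C}H_j$ is arbitrary, so this choice gives nothing.

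A better choice is $x=H_k+E_{k,j}$ for a fixed $j\neq k$. Write $X_x=\sum\lambda_{p,q}E_{p,q}$ and $Y_x=\sum\mu_pH_p$. The $\mathfrak{so}(n)$-part of $\Delta(x)=\Delta(H_k)$ is $[X_x,E_{k,j}]=0$, so again $X_x$ centralizes $E_{k,j}$; in particular $\lambda_{k,l}=\lambda_{j,l}=0$ for $l\notin\{k,j\}$. Then $[X_x,H_k]=\lambda_{k,j}[E_{k,j},H_k]=-\lambda_{k,j}H_j$. It follows that $\Delta(H_k)\in\operatorname{span}\{H_k,H_j\}$, since $[Y_x,E_{k,j}]\in\mathbb{C}H_k+\mathbb{C}H_j$ as well. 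Intersecting these constraints over two different choices $j\neq j'$ gives $\Delta(H_k)\in\mathbb{C}H_k$, say $\Delta(H_k)=a_kH_k$. This centralizer argument is the key step and the one I expect to require the most care.

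\textbf{Step 2: the eigenvalues coincide.} Take $x=H_i+H_j$ with $i\neq j$, and, to rigidify $X_x$ as before, consider $x=H_i+H_j+E_{p,q}$ with $p,q\notin\{i,j\}$. Since $\Delta(E_{p,q})=0$, we get $\Delta(x)=a_iH_i+a_jH_j$. The vanishing $\mathfrak{so}(n)$-part forces $X_x$ to centralize $E_{p,q}$. Then $[X_x,H_i+H_j]$ comes only from the components $\lambda_{i,j}E_{i,j}$ and the $E_{i,l},E_{j,l}$ with $l\notin\{p,q,i,j\}$; the latter contribute in $H_l$ directions, which must vanish. The coefficient $\lambda_{i,j}$ contributes $\lambda_{i,j}(H_i\cdot(-1)\cdot\ldots)$, precisely $\lambda_{i,j}([E_{i,j},H_i]+[E_{i,j},H_j])=\lambda_{i,j}(-H_j+H_i)$. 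The $Y_x$-part contributes in $H_p,H_q$, which must vanish. Comparing coefficients gives
\[
a_i=\lambda_x+\lambda_{i,j},\qquad a_j=\lambda_x-\lambda_{i,j},
\]
which does not yet force $a_i=a_j$.

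So instead use $x=H_i+E_{i,j}$ once more: its $H_j$-coefficient gives $0=-\lambda_{k,j}+(\text{coefficient from }Y_x)$, which is not decisive either. The decisive choice is $x=H_i+H_j+E_{i,j}$. Its $\mathfrak{so}(n)$-part forces $X_x$ to centralize $E_{i,j}$, so $X_x=cE_{i,j}+Z$ with $Z$ supported off $\{i,j\}$, and $[Z,H_i+H_j]=0$. Then
\[
\Delta(x)=a_iH_i+a_jH_j=c(H_i-H_j)+[Y_x,E_{i,j}]+\lambda_x(H_i+H_j),
\]
where $[Y_x,E_{i,j}]=\mu_jH_i-\mu_iH_j$ is arbitrary in $\operatorname{span}\{H_i,H_j\}$. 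This is still inconclusive, so the final fallback is to exploit linearity together with Step 1 applied to the rotated vector $H_i+H_j$. The element $H_i+H_j$ is, up to an automorphism of $\mathfrak{e}(n)$ (a rotation $\exp\operatorname{ad}$ of $\mathfrak{so}(n)$, which preserves the class of local derivations vanishing on $\mathfrak{so}(n)$), a multiple of some $H_k$. Hence by Step 1, run in a rotated basis, $\Delta(H_i+H_j)\in\mathbb{C}(H_i+H_j)$. This forces $a_i=a_j$, so $\Delta=\lambda\delta$.
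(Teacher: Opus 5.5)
Your proof is correct. Step 1 is the paper's own first step. The paper also tests $H_i+E_{i,j}$ and obtains $\Delta(H_i)\in\mathbb{C}H_i+\mathbb{C}H_j$ for every $j\neq i$. You read this off from the centralizer of $E_{i,j}$, while the paper reads it off from the $E_{k,j}$- and $H_k$-coefficients, but it is the same computation.

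The equality of the eigenvalues is where the routes genuinely diverge. The paper normalizes $\Delta(H_1)=0$ and then compares coefficients directly at the single element $y=E_{1,i}+E_{1,j}+E_{i,j}+H_1+H_i$. The mechanism is as follows. Put $W=E_{1,i}+E_{1,j}+E_{i,j}$. All surviving terms of $D_y(y)$ lie in the plane $[W,H(n)]$, which contains $H_1+H_i$ and is stable under the centralizer of $W$, whereas $H_i\notin[W,H(n)]$.

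You use symmetry instead. The map $\phi=\exp(\tfrac{\pi}{4}\operatorname{ad}E_{i,j})$ is an automorphism preserving $\mathfrak{so}(n)$, with $\phi(H_i+H_j)=\sqrt{2}\,H_i$. Moreover $\phi\Delta\phi^{-1}$ is again a local derivation killing $\mathfrak{so}(n)$, because $\phi\Delta\phi^{-1}(x)=(\phi D_{\phi^{-1}x}\phi^{-1})(x)$. Step 1 applied to it yields $\Delta(H_i+H_j)\in\mathbb{C}(H_i+H_j)$, hence $a_i=a_j$.

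Your route avoids the long bracket expansion and explains why the lemma holds: Step 1, transported by rotations, forces every non-isotropic vector of $H(n)$ to be an eigenvector. The cost is stepping outside the purely local computation and invoking automorphisms. The paper's argument needs nothing beyond the local condition at one explicit element, but its bookkeeping is heavy.

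When you write it up, delete the three abandoned attempts in Step 2. Also state the two facts about $\phi$ explicitly rather than in passing: the formula $\phi(H_i+H_j)=\sqrt{2}\,H_i$, and the check that conjugation by $\phi$ preserves local derivations vanishing on $\mathfrak{so}(n)$.
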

\begin{proof}
Let \(\Delta\) be a local derivation on \(\mathfrak{e}(n)\) such that \(\Delta(\mathfrak{so(n)})=0
    \).
For any \(1\leq i\leq n\), choose \(j\) such that \(1\leq i\neq j\leq n\),  then \(\Delta(E_{i,j})=0\).
Let \(x=H_i+E_{i,j}\), by Lemma \ref{lem:4.1},  we have
   \begin{align*}   \sum_{k=1}^{n}d_{i,k}H_{k}&=\Delta(H_i)=\Delta(H_i+E_{i,j})\\
&=D_x(H_i+E_{i,j})\\
&=(\sum_{1\leq s
< t\leq n}\lambda^{x}_{s,t}\,\text{ad}E_{s,t}+\sum_{1\leq l
\leq  n}\mu^{x}_{l}\,\text{ad}H_l+\lambda_x\delta
)(H_i+E_{i,j})\\
&=\sum_{1\leq s<i\leq n,s\neq j }\lambda^{x}_{s,i}
(H_s+E_{s,j})
-\sum_{1\leq i<t\leq n, t\neq j }\lambda^{x}_{i,t}(H_t+E_{t,j})
-\sum_{1\leq j<t\leq n, t\neq i}\lambda^{x}_{j,t}E_{i,t}\\
&\quad-\sum_{1\leq s<j\leq n, s\neq i}\lambda^{x}_{s,j}E_{s,i}
+\mu_i^{x}H_j-\mu_j^{x}H_i+\lambda_xH_i.
   \end{align*}
   Comparing the coefficients of \(H_k\), we get \(d_{i,k}=\lambda_{k,i}^{x}=0, \ \text{for}\ k<i\) and \(d_{i,k}=-\lambda_{i,k}^{x}=0, \ \text{for}\ i<k\neq j\leq n\). This shows that \(\Delta(H_i)\in\mathbb{C}H_i+\mathbb{C}H_j\) for all \(1\leq i\neq j\leq n\), and therefore \(\Delta(H_i)\in\mathbb{C}H_i\).

Suppose that \(\Delta(H_1)=\lambda H_1\). Let \(\Delta'=\Delta-\lambda\,\delta\). Then \[\Delta'(\mathfrak{so}(n))=0, \ \Delta'(H_1)=0.\]
For \(2 \leq i\leq n\), we assume that
\(\Delta'(H_i)=d_{i,i}'H_i\). 
Choose that \(j\notin\{ 1,i\}\),
let \(y=E_{1,i}+E_{1,j}+E_{i,j}+
H_1+H_i\), then
\begin{align*}   d_{i,i}'H_{i}&=\Delta'(H_i)=\Delta'(E_{1,i}+E_{1,j}+E_{i,j}+
H_1+H_i)\\
&=D_y(E_{1,i}+E_{1,j}+E_{i,j}+
H_1+H_i)\\
&=(\sum_{1\leq s
\neq t\leq n}\lambda^{y}_{s,t}\,\text{ad}E_{s,t}+\sum_{1\leq l
\leq  n}\mu^{y}_{l}\,\text{ad}H_l+\lambda_y\delta
)(E_{1,i}+E_{1,j}+
E_{i,j}+
H_1+H_{i})\\
&=\sum_{s\neq 1,i}\lambda_{s,1}^{y}E_{s,i}-
\sum_{s\neq 1,i}\lambda_{s,i}^{y}E_{s,1}-
\sum_{t\neq 1,i}\lambda_{1,t}^{y}E_{t,i}+
\sum_{t\neq 1,i}\lambda_{i,t}^{y}E_{t,1}
+\mu_1^{y}H_i-\mu_i^{y}H_1\\
&\quad+\sum_{s\neq 1,j}\lambda_{s,1}^{y}E_{s,j}-
\sum_{s\neq 1,j}\lambda_{s,j}^{y}E_{s,1}-
\sum_{t\neq 1,j}\lambda_{1,t}^{y}E_{t,j}+
\sum_{t\neq 1,j}\lambda_{j,t}^{y}E_{t,1}
+\mu_1^{y}H_j-\mu_j^{y}H_1\\
&\quad+\sum_{s\neq i,j}\lambda_{s,i}^{y}E_{s,j}-
\sum_{s\neq i,j}\lambda_{s,j}^{y}E_{s,i}-
\sum_{t\neq i,j}\lambda_{i,t}^{y}E_{t,j}+
\sum_{t\neq i,j}\lambda_{j,t}^{y}E_{t,i}
+\mu_i^{y}H_j-\mu_j^{y}H_i\\
&\quad+\lambda_yH_1+\sum_{1<s}\lambda_{s,1}^{y}H_s-
\sum_{1<t}\lambda_{1,t}^{y}H_t+\lambda_yH_i+\sum_{i\neq s}\lambda_{s,i}^{y}H_s-
\sum_{i\neq t}\lambda_{i,t}^{y}H_t.
   \end{align*}
 Comparing the coefficients of \(H_1,H_i,
 H_j, E_{1,j},E_{j,i}\), we see that
 \begin{align*}
     0&=-\mu_i^{y}-\mu_j^{y}+\lambda_y+\lambda_{1,i}^{y}-\lambda_{i,1}^{y}\\
      d_{i,i}'&=\mu_1^{y}-\mu_j^{y}+\lambda_y+\lambda_{i,1}^{y}-
      \lambda_{1,i}^{y}\\
0&=\mu_1^{y}+\mu_i^{y}+\lambda_{j,1}^{y}-\lambda_{1,j}^{y}+
    \lambda_{j,i}^{y}-\lambda_{i,j}^{y}\\
0&=\lambda_{j,i}^{y}- \lambda_{i,j}^{y}+\lambda_{1,i}^{y}-
    \lambda_{i,1}^{y}\\
0&=\lambda_{j,1}^{y}- \lambda_{1,j}^{y}-\lambda_{i,1}^{y}+\lambda_{1,i}^{y}.
 \end{align*}
Combinaing the above equations, we have that \(d_{i,i}'=0
\). So \(\Delta'=0\), this implies that \(\Delta=\lambda\,\delta\).
\end{proof}

Now we shall give the main result concerning local derivations on \( \mathfrak{e}(n) \).

\begin{theorem} 
\label{thm:locDer}
All local derivations on 
\( \mathfrak{e}(n) \) are derivations.
\end{theorem}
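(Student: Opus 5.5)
We plan a successive-reduction argument: subtract suitable derivations from a given local derivation $\Delta$ until the hypotheses of Lemmas \ref{lem:localD0} and \ref{lem:localD1} are met. Throughout we use that the difference of a local derivation and a derivation is again a local derivation, since $\operatorname{Der}(\mathfrak{e}(n))$ is a vector space.

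\textbf{Step 1: the $\mathfrak{so}(n)$-component.} Consider $\Delta_{0,0}|_{\mathfrak{so}(n)}$. By the lemma following Lemma \ref{lem:2.1}, it is a local derivation of $\mathfrak{so}(n)$. Since $n\ge 4$, $\mathfrak{so}(n)$ is semisimple, and by the result of \cite{AK2} every local derivation of it is a derivation. Hence it is inner, so $\Delta_{0,0}|_{\mathfrak{so}(n)}=\operatorname{ad}X|_{\mathfrak{so}(n)}$ for some $X\in\mathfrak{so}(n)$. Put $\Delta^{(1)}=\Delta-\operatorname{ad}X$. Because $\operatorname{ad}X$ preserves both $\mathfrak{so}(n)$ and $H(n)$, we have $p_0\Delta^{(1)}|_{\mathfrak{so}(n)}=0$, that is, $\Delta^{(1)}(\mathfrak{so}(n))\subseteq H(n)$.

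\textbf{Step 2: killing $\Delta^{(1)}$ on the toral element $T=\sum_{i=1}^m E_{2i-1,2i}$.} By the description of derivations (Theorem \ref{thm:Der}), $\Delta^{(1)}(T)=[t_T,T]+\lambda_T\delta(T)=[t_T,T]$. We know this lies in $H(n)$, so it equals $p_1[t_T,T]=[p_1 t_T,T]=-\operatorname{ad}T(Y_0)$ with $Y_0=p_1t_T\in H(n)$. Set $\Delta^{(2)}=\Delta^{(1)}-\operatorname{ad}Y_0$. Then $\Delta^{(2)}(T)=0$, and $\Delta^{(2)}(\mathfrak{so}(n))\subseteq H(n)$ still holds because $\operatorname{ad}Y_0$ maps $\mathfrak{so}(n)$ into $H(n)$. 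Lemma \ref{lem:localD0} now applies. For $n$ even it gives $\Delta^{(2)}|_{\mathfrak{so}(n)}=0$. For $n$ odd it gives $(\Delta^{(2)}-\lambda\operatorname{ad}H_n)|_{\mathfrak{so}(n)}=0$; note that $\operatorname{ad}H_n$ kills $T$, which is why this freedom remains. In the odd case we replace $\Delta^{(2)}$ by $\Delta^{(3)}=\Delta^{(2)}-\lambda\operatorname{ad}H_n$, and in the even case we set $\Delta^{(3)}=\Delta^{(2)}$. In both cases $\Delta^{(3)}$ is a local derivation vanishing on $\mathfrak{so}(n)$.

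\textbf{Step 3: conclusion.} Lemma \ref{lem:localD1} yields $\Delta^{(3)}=\mu\delta$ for some $\mu\in\mathbb{C}$. Therefore
\[
\Delta=\operatorname{ad}(X+Y_0+\lambda H_n)+\mu\delta,
\]
which is a derivation by Theorem \ref{thm:Der}.

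\textbf{Main obstacle.} The delicate point is Step 2: producing an inner derivation from $H(n)$ that kills $\Delta^{(1)}(T)$ while preserving the property $\Delta(\mathfrak{so}(n))\subseteq H(n)$. The key observation is that $[\mathfrak{so}(n),T]\subseteq\mathfrak{so}(n)$, so the $H(n)$-part of $\Delta^{(1)}(T)$ comes only from $p_1t_T$. We would also double-check that Step 1 legitimately uses $\Delta_{0,0}$ rather than $p_0\Delta$ on all of $\mathfrak{e}(n)$; this is harmless, since only its restriction to $\mathfrak{so}(n)$ is used.
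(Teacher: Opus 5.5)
Your proposal is correct and follows the same route as the paper. You subtract $\operatorname{ad}X$ coming from $\Delta_{0,0}|_{\mathfrak{so}(n)}$, kill the value on $\sum_i E_{2i-1,2i}$, apply Lemma~\ref{lem:localD0} (with the $\operatorname{ad}H_n$ correction for odd $n$), and finish with Lemma~\ref{lem:localD1}. Your one deviation is to subtract only $\operatorname{ad}(p_1t_T)$ rather than the whole derivation $D_u$ as the paper does, and this is actually an improvement. An arbitrary $D_u$ may contain $\operatorname{ad}Z$ with $0\neq Z\in\mathfrak{so}(n)$ centralizing $u$, which would break the hypothesis $\Delta(\mathfrak{so}(n))\subseteq H(n)$ of Lemma~\ref{lem:localD0}; the paper does not re-check this hypothesis after subtracting $D_u$.
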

    \begin{proof}
Let \(\Delta\) be a local derivation on \(\mathfrak{e}(n)\). 
Then \(\Delta_{0,0}\mid_{\mathfrak{so}(n)}\) is a local derivation on \(\mathfrak{so}(n)\). 
In view of \cite[Theorem 3.1]{AK2},
\(\Delta_{0,0}\mid _{\mathfrak{so}(n)}\) is an inner  derivation  of  \(\mathfrak{so}(n)\), i.e.,
\(\Delta_{0,0}\mid _{\mathfrak{so}(n)}
=\text{ad}\,X\mid _{\mathfrak{so}(n)}\) for some \(X\in \mathfrak{so}(n)\). 

Set \(\Delta'=
\Delta-\text{ad}\,X\). Then \(\Delta'\) is also a local derivation
on  \(\mathfrak{e}(n)\) such that 
\(\Delta'(\mathfrak{so}(n))\subseteq H(n)\). We divide the proof into two cases: (1) \(n \) is even, (2) \(n \) is odd.

\textbf{Case 1.} \(n\) is even. Let \(u=\sum_{i=1}^{\frac{n}{2}}E_{2i-1,2i}\).
Then there exists a derivation \(D_{u}\) on \(\mathfrak{e}(n)\) such that \(\Delta' (u) = D_{u}(u)\). Set
\[
\Delta'_{1}(x) = \Delta' (x) - D_{u}(x).
\]
Then  \(\Delta'_{1}\) 
is also a
 local derivation of $\mathfrak{e}(n)$, which satisfies the condition of Lemma
\ref{lem:localD0}(i): \(\Delta'_{1}(u) = 0\). Hence,
\(\Delta'_{1}\mid_{\mathfrak{so}(n)}=0\).  By Lemma \ref{lem:localD1}, we have \(\Delta'_1=\lambda\,\delta\) for some \(\lambda\in \mathbb{C}\). Then 
\[
\Delta=\Delta'+\text{ad}\,X=\Delta_1'+D_u+\text{ad}\,X=\lambda\,\delta+D_u+\text{ad}\,X.
\]

\textbf{Case 2.} \(n\) is odd. Let \(v=\sum_{i=1}^{\frac{n-1}{2}}E_{2i-1,2i}\).
Then there exists a derivation \(D_{v}\) on \(\mathfrak{e}(n)\) such that \(\Delta'(v) = D_{v}(v)\). Set
\[
\Delta'_{2}(x) = \Delta' (x) - D_{v}(x).
\]
Then  \(\Delta'_{2}\) 
is also a
 local derivation of $\mathfrak{e}(n)$, which satisfies the condition of Lemma
\ref{lem:localD0}(ii): \(\Delta'_{2}(v) = 0\). Hence,
\(\Delta'_{2}\mid_{\mathfrak{so}(n)}=k\,\text{ad}H_{n}\mid_{\mathfrak{so}(n) }\).  By Lemma \ref{lem:localD1}, we have \(\Delta'_2-k\,\text{ad}H_{n}=l\,\delta\) for some \(l\in \mathbb{C}\). Then 
\[
\Delta=\Delta'+\text{ad}\,X=\Delta_2'+D_v+\text{ad}\,X=k\,\text{ad}\,H_n+l\,\delta+D_v+\text{ad}\,X.
\]
The proof is complete.
\end{proof}
\section*{Acknowledgments}
 The author gratefully acknowledges the financial support from the National Natural Science Foundation of China (NSFC, Grant No. 12501033) and the Guangdong Basic and Applied Basic Research Foundation (Grant No. 2024A1515110043). The author also wishes to thank Dr. Wangbin for insightful discussions and helpful comments.


\end{document}